\newtheorem{thm}{Theorem}[section]
\newtheorem{cor}[thm]{Corollary}
\newtheorem{prop}[thm]{Proposition}
\newtheorem{rem}[thm]{Remark}
\def\square{\vbox{
      \hrule height 0.4pt
      \hbox{\vrule width 0.4pt height 5.5pt \kern 5.5pt \vrule width 0.4pt}
      \hrule height 0.4pt}}
\def\id{\mathrm{id}}
\def\ch\mathrm{c h}
\def\ab{\mathrm{a b}}
\def\End{\mathrm{End}}
\def\End{\mathrm{End}}
\def\Aut{\mathrm{Aut}}
\def\twist{\mathrm{twist}}
\long\def\symbolfootnote[#1]#2{\begingroup%
\def\thefootnote{\fnsymbol{footnote}}\footnote[#1]{#2}\endgroup}
\newcommand{\Z}{\mathbb{Z}}
\newcommand{\calC}{\ensuremath{\mathcal{C}}}
\newcommand{\calQ}{\ensuremath{\mathcal{Q}}}
\newcommand{\calG}{\ensuremath{\mathcal{G}}}
\numberwithin{equation}{section}
\newcommand{\auths}[1]{\textrm{#1},}
\newcommand{\artTitle}[1]{\textsl{#1},}
\newcommand{\jTitle}[1]{\textrm{#1}}
\newcommand{\Vol}[1]{\textbf{#1}}
\newcommand{\Year}[1]{\textrm{(#1)}}
\newcommand{\Pages}[1]{\textrm{#1}}
\title[Twisted Simplicial Groups]{Twisted Simplicial Groups and Twisted Homology of Categories}
\author{J. Y. Li }
\address{Department of Mathematics and Physics, Shijiazhuang Tiedao University 050000, China}
\email{yanjinglee@163.com}
\author{V. V. Vershinin }
\address{D\'epartement des Sciences Math\'ematiques,
                                     Universit\'e de Montpellier,
Place Eug\`ene Bataillon,
34095 Montpellier cedex 5, France}
\email{vladimir.verchinine@univ-montp2.fr}
\address{Sobolev Institute of Mathematics, Novosibirsk 630090,
Russia }
\email{ versh@math.nsc.ru}
\address{Laboratory of Quantum Topology, Chelyabinsk State University, Brat'ev
Kashirinykh street 129, Chelyabinsk 454001, Russia}
\author{J. Wu }
\address{Department of Mathematics, National University of Singapore, 2 Science Drive 2
Singapore 117542} \email{matwuj@nus.edu.sg}
\urladdr{www.math.nus.edu.sg/\~{}matwujie}
\thanks{
The first author is supported by NSFC (11201314, 11302136) of China and
the Excellent Young Scientist Fund of Shijiazhuang Tiedao University. The second author is partially supported by the Laboratory of
Quantum Topology of Chelyabinsk State University (Russian Federation government grant 14.Z50.31.0020) and RFBR grants 14-01-00014
and 13-01-92697-IND. The last author is partially supported by the Singapore Ministry of Education research grant (AcRF Tier 1 WBS No.
R-146-000-190-112) and a grant (No. 11329101) of NSFC of China.}
\subjclass[2000]{Primary 55U10; Secondary 18G30}
\begin{document}

\begin{abstract}
Let $A$ be either a simplicial complex $K$ or a small category $\calC$ with $V(A)$
as its set of vertices or objects.
We define a
twisted
    structure on $A$ with
  coefficients in a simplicial group $G$ as a function
$$
\delta\colon V(A)\longrightarrow \End(G), \quad v\mapsto \delta_v
$$
such that
$
\delta_v\circ \delta_w=\delta_w\circ \delta_v
$
if there exists an edge in $A$  joining $v$ with $w$ or
 an arrow either from $v$ to $w$ or from $w$ to $v$.
 We give a canonical
 construction of twisted simplicial group as well as twisted
  homology for $A$
  with a given twisted structure.
Also we determine the homotopy type of of this simplicial group
 as the loop space over certain  twisted smash product.
\end{abstract}

\maketitle

\section {Introduction}

Motivated by the applications of algebraic topology to dynamic processes in the network and other areas of mathematics, some new versions of homology (cohomology) theory and combinatorial homotopy theory of graphs and simplexes were introduced recently~\cite{BL,BKLW,CM,Everitt-Turner,GMY,GMY2, GLMY,TB, Turner-Wagner}. In particular, the classical simplicial homology was varied by considering homology with local coefficient system for colored posets~\cite{Everitt-Turner, Turner-Wagner} or by inserting $\delta$-factors as numerical data on vertices in the differentials of simplicial chain complexes~\cite{GMY,GMY2}. The purpose of this paper is to give a canonical twisted construction of simplicial groups as well as twisted homology for a simplicial complex and a  small category with a given twisted structure. In particular, our twisted homology (twisted cohomology) is a generalization of the $\delta$-homology ($\delta$-cohomology) introduced in~\cite{GMY, GMY2}. Categories con!
 sidered in the paper are small and we omit  mentioning it. All necessary definitions will be given later in the text of the paper.

Let $A$ be either a simplicial complex $K$ or a category $\calC$. We are considering
 category from  geometrical
 point of view, using the terminology of underlying quiver, for example we are
 using word "vertices" instead of objects. However  composition of morphisms and identity morphisms are involved
 in our constructions.
Let $V(A)$ be the vertex set of $A$, $G$ be a simplicial group. We denote by
$\End(G)$  the monoid of simplicial endomorphisms of $G$, namely $\End(G)$ consists
   of graded endomorphisms $f=\{f_n\colon G_n\to G_n\}_{n\geq0}$ such that
   $d_i\circ f=f\circ d_i$ and $s_i\circ f=f\circ s_i$
   for the face homomorphisms $d_i\colon G_n\to G_{n-1}$ and the degeneracy
   homomorphisms $s_i\colon G_n\to G_{n+1}$ for $0\leq i\leq n$.  A \textit{twisted
    structure} on $A$ with
  coefficients in a simplicial group $G$ is a function
$$
\delta\colon V(A)\longrightarrow \End(G), \quad v\mapsto \delta_v
$$
satisfying the following commuting rule:
\begin{equation}\label{equation1.1}
\delta_v\circ \delta_w=\delta_w\circ \delta_v
\end{equation}
if there exists an edge in $A$ (if $A$ is a simplicial complex) joining $v$ with $w$ or
 an arrow either from $v$ to $w$ or from $w$ to $v$ (if $A$ is a category).
A twisted structure on $A$ with coefficients in $G$  is called \textit{non-singular} if $\delta_v\colon G\to G$ is a simplicial automorphism for each $v\in V(A)$.

Now let us suppose that a simplicial complex  $K$ has a total order.
There is a canonical associated
  simplicial set $S(K)$
  by allowing the repeating of vertices in the
   sequences of paths.
   For a category $\calC$ take $S=S(\calC)$ to be a nerve of $\calC$.
   The simplicial set $S=S(A)$ 
   has a nature that the set $S_n$  of $n$-paths ($n$-simplices) admits a form of the sequences
$$v_0\alpha_1v_1\cdots\alpha_nv_n$$
with $v_i\in V(\calC)$ and
$\alpha_i\colon v_{i-1}\to v_i$ is a morphism or the sequences
 $(v_0, v_1,\ldots,v_n)$ for
$v_i\in V(K)$ with $v_i\leq v_j$ for $i\leq j$ under a total order on $V(K)$ and $\{v_0,\ldots,v_n\}$ being a simplex in $K$. The face operation
$d_i\colon S_n\to S_{n-1}$, $0\leq i\leq n$, is given by removing the vetex $v_i$ followed the composition $\alpha_i\alpha_{i+1}\colon v_{i-1}\to v_{i+1}$, and the
 degeneracy operation $s_i\colon S_n\to S_{n+1}$, $0\leq i\leq n$, is given by doubling
  the vertex $v_i$ with inserting the identity arrow $e_{v_i}\colon v_i\to v_i$ if it is
   the case of category.

  We
    construct 
    a $\Delta$-group and a simplicial group
     $F^G_{\delta}[A]$
    depending on the twisted structure $\delta$.
This $\Delta$-group  seems to be interesting. If the $\delta$-structure is given by
 $\delta_v\equiv \id_{G}$ for $v\in V(A)$,
then $F^G_{\delta}[A]$ is Carlsson's $J_G(X)$-construction~\cite{Carlsson} or Quillen's
 tensor product~\cite{Quillen2} with its geometric realization having the homotopy type
  of $\Omega(B|G|\wedge |A|)$, where $|X|$ is the geometric realization of a
  simplicial set $X$.

The main result in this article is to determine the homotopy type of  $F^G_{\delta}[A]$
 as the loop space of the \textit{twisted smash product} $B|G|\wedge_{\delta} |A|$.
 See section~\ref{section4} for the definition of twisted smash product.



The article is organized as follows. In Section~\ref{section2}, we give a review on the
 path complexes of categories. The twisted construction of the $\Delta$-groups and
  simplicial groups
   is given in section~\ref{section3} by examining the $\Delta$-identity and simplicial identities. The homotopy type of the twisted simplicial group $F^G_{\delta}[A]$ is studied in section~\ref{section4}, where the main result is Theorem~\ref{main-theorem}. In section~\ref{section5}, we explore the twisted homology of simplicial complexes as well as   categories.

\section{Path complexes of categories and simplicial set extension of simplicial complexes}\label{section2}
\subsection{Path complexes of categories}\label{subsection2.1}
Recall that a quiver is a quadruple $\mathcal{Q}=(V(\calQ),E(\calQ), s, t)$,
where the set $V$ is called the set of vertices of $\mathcal{Q}$,
    the set $E$ is called the set of edges of $\mathcal{Q}$,
    $s$ and $t$ are two maps
    $ s: E \to V$, giving the start or source of the edge, and $t: E \to V$,
    giving the target of the edge.
The
elements in $E(\calQ)$ we denote by \textit{arrows} $\alpha\colon v\to w$, or
$v\alpha w$.

Recall also that 
there is a forgetful functor from small categories to quivers
$$
{\bf\operatorname{\bf Cat}}\to {\bf\operatorname{\bf Quiv}}.
$$


By  an $n$-path for a category $\calC$ we mean the sequence
$v_0\alpha_1\cdots  v_{n-1}\alpha_{n}v_{n}$ with $n\geq1$. There exists a trivial path
 (the identity arrow) $e_v\colon v\to v$ for each vertex $a\in V(\calC)$.
Let $S_n(\calC)$ be the set of all $n$-paths with $n\geq1$  and $S_0(\calC)=V(\calC)$.

The face operations
 in $S(\calC)$ are given by removing a vertex followed by
 the composition of arrows in the following sense:
\begin{equation}\label{equation2.1}
d_i(v_0\alpha_1v_1\alpha_2v_2\cdots \alpha_nv_n)=v_0\alpha_1v_1\cdots v_{i-1}
(\alpha_{i+1}\circ \alpha_{i})v_{i+1}\cdots\alpha_nv_n
\end{equation}
for $0\leq i\leq n$, where $d_0$ removes the first vertex together with
the first arrow and
 the last face $d_n$ removes the last vertex together with the last arrow.
The \textit{degeneracy operations}
$$
s_i\colon S_n(\calC)\longrightarrow S_{n+1}(\calC)
$$
for $0\leq i\leq n$ are defined  by doubling the $i$-vertex with inserting the
trivial path.
Then $S(\calC)=\{S_n(\calC)\}_{n\geq0}$ is the classifying
 simplicial set (or \textit{nerve}) of the category $\calC$.
Choose a vertex $a_0\in V(\calC)$ as a basepoint. Consider the constant $n$-path $a_0^{n}=a_0e_{a_0}a_0\cdots e_{a_0}a_0$ as a basepoint for the set $S_n(\calC)$. Then $S(\calC)$ is a pointed simplicial set \cite[Chapter~2]{Wu}.

\subsection{Simplicial set extension of simplicial complexes}

Let $K$ be a simplicial complex with its vertex set $V(K)$ which has a total order. 
Recall that the simplicial set $S(K)$ induced by $K$ is given as follows:
\begin{enumerate}
\item The set $S(K)_n$ consists of sequences $(v_0,v_1,\ldots,v_n)$ of vertices of $K$ such that
$v_0\leq v_1\leq\cdots \leq v_n$ and  $\{v_0,\ldots,v_n\}$ forms a simplex in  $K$.
\item The face operation $d_i\colon S(K)_n\to S(K)_{n-1}$,  $0\leq i\leq n$, is given by
\begin{equation}\label{equation1}
d_i(v_0,v_1,\ldots,v_n)=(v_0,v_1,\ldots,v_{i-1},v_{i+1},\ldots, v_n).
\end{equation}
\item The degeneracy operation $s_i\colon S(K)_n\to S(K)_{n+1}$, $0\leq i\leq n$, is given by
\begin{equation}\label{equation2}
s_i(v_0,v_1,\ldots,v_n)=(v_0,v_1,\ldots,v_{i-1},v_i,v_i,v_{i+1},\ldots, v_n).
\end{equation}
\end{enumerate}
The geometric realization of the simplicial set $S(K)$ is homeomorphic to the polyhedron $|K|$ (see, for example, \cite{Curtis}). We choose a basepoint in $S(K)_n$  by $a^n_0=(a_0,a_0,\ldots,a_0)$ for a vertex $a_0\in V(K)$ so that $S(K)$ is a pointed simplicial set.

\section{The twisted construction of $\Delta$-groups and simplicial Groups}\label{section3}

A $\Delta$-{\it set} is a sequence of sets $X = \{{X_n}, {n\geq 0}\}$ with maps  called
{\it faces}
$$
d_i : X_n \to X_{n-1}, \ 0 \leq i \leq n,$$
 such that the following condition is hold
\begin{equation} d_id_j = d_jd_{i+1}
\label{Delta-eq}
\end{equation}
for $i \geq j$. It is called the $\Delta$-{\it identity} \cite{Wu}.
Roughly speaking $\Delta$-set is a simplicial set without degeneracies.

A $\Delta$-set $G = \{G_n\}_{n\geq 0}$ is called a $\Delta$-{\it group} if each $G_n$
is a group, and each face
$d_i$ is a group homomorphism \cite{Wu}.

 We
   proceed the following steps for constructing a $\Delta$-group and a simplicial group
    depending on the twisted structure $\delta$ on $A$:
\begin{enumerate}
\item[1)] Choose a vertex $a_0\in V(A)$
as a basepoint. Consider the constant $n$-path $a_0^{n}=a_0e_{a_0}a_0\cdots e_{a_0}a_0$
 as a basepoint for the set $S_n$. Define $F^G[A]_n$ to be the free product of the group
  $G_n$ with indices running over all non-basepoint elements in $S_n(A)$. More precisely,
   let $(G_n)_x$ be a copy of $G_n$ labelled by an element $x\in S_n(A)$. Then
   $F^G[A]_n$ is the quotient group of the free product
$
\ast_{x\in S_n} (G_n)_x
$
subject to the relations that $(G_n)_{a_0^{n}}=\{1\}$.
\item[2)] Let $g_x$ denote the element $g\in G_n$ in its copy group $(G_n)_x$ labelled
 by $x$. Define a (twisted) face operation $d_i^{\delta}\colon F^G[A]_n\to F^G[A]_{n-1}$,
  $0\leq i\leq n$ to be the (unique) group homomorphism such that $d^{\delta}_i$
   restricted to each copy $(G_n)_{v_0\alpha_1v_1\cdots\alpha_nv_n}$ is given by the
  twisted formula
\begin{equation}\label{twisted-face}
d^{\delta}_i|_{(G_n)_{v_0\alpha_1v_1\cdots\alpha_nv_n}}(g_{v_0\alpha_1v_1\cdots\alpha_nv_n})=(\delta_{v_i}(d_i(g)))_{d_i(v_0\alpha_1v_1\cdots\alpha_nv_n)}.
\end{equation}
Namely the element $d_ig\in (G_{n-1})_{d_i(v_0\alpha_1v_1\cdots\alpha_nv_n)}$ is twisted by the endomorphism $\delta_{v_i}\colon G_{n-1}\to G_{n-1}$
By Proposition~\ref{monoid} below, the sequence of groups
$F^{G,\Delta}_{\delta}[A]=\{F^G[A]_n\}_{n\geq0}$ with the twisted face operations
forms a $\Delta$-group, where the commuting rule ~(\ref{equation1.1}) assures the
 $\Delta$-identity.
\item[3)] Suppose that the $\delta$-structure is non-singular. We define a twisted
 degeneracy operation $s_i^{\delta}\colon F^G[A]_n\to F^G[A]_{n+1}$, $0\leq i\leq n$, to
  be the (unique) group homomorphism such that $s^{\delta}_i$ restricted to each copy
   $(G_n)_{v_0\alpha_1v_1\cdots\alpha_nv_n}$ is given by the twisted formula
\begin{equation}\label{twisted-degeneracy}
s^{\delta}_i|_{(G_n)_{v_0\alpha_1v_1\cdots\alpha_nv_n}}(g_{v_0\alpha_1v_1\cdots\alpha_nv_n})=((\delta_{v_i})^{-1}(s_i(g)))_{s_i(v_0\alpha_1v_1\cdots\alpha_nv_n)}.
\end{equation}
By Proposition~\ref{monoid}, the sequence of groups
$F^{G}_{\delta}[A]=\{F^G[A]_n\}_{n\geq0}$ with the twisted face operations
$d^{\delta}_i$ in step~2 and the above twisted degeneracy operations $s_i^{\delta}$
forms a simplicial group \cite[Chapter 2]{Wu}, where the commuting rule ~(\ref{equation1.1}) and invertibility of $\delta_v$ are essential for assuring the simplicial identities.
\end{enumerate}

The $\Delta$-group $F^{G, \Delta}_{\delta}[A]$ is a generalization of
$\delta$-homology~\cite{GMY,GMY2} in the following sense. Let $G$ be any abelian group.
Let $\delta\colon V(A)\to \End(G)$ be a function satisfying the commuting
 rule~(\ref{equation1.1}). We consider $G$ as a discrete simplicial group with $G_n=G$
  and the faces and degeneracies given by the identity. Then we have the
  $\delta$-structure on $A$. By taking the abelianization of the (non-commutative)
   $\Delta$-group $F^{G,\Delta}_{\delta}[A]$ in step~2, we have an abelian
   $\Delta$-group with the twisted faces given in~(\ref{twisted-face}) on chains
$$
(F^{G}[A]_n)^{\ab}=\left(\bigoplus_{x\in S_n}G_x\right)/(G)_{a_0^{n}}
$$
having the differentials given by
$$
\partial_n(g_{v_0\alpha_1v_1\cdots\alpha_nv_n})=\sum_{i=0}^n(-1)^i(\delta_{v_i
}(d_i(g)))_{d_i(v_0\alpha_1v_1\cdots\alpha_nv_n)}.
$$
In the case where $G$ is a commutative ring $R$ and $\delta_v$ is the translation given
by an element $\delta_v\in R$, the above formula on the differentials is exactly
given by inserting the factor $\delta_v$ as described in~\cite{GMY,GMY2}.

\begin{prop}\label{proposition3.1}
Let $S$ be $S(A)$ where $A$ is either  a simplicial complex $K$ or 
a category
$\calC$ with twisted structure in a simplicial group $G$. Then
\begin{enumerate}
\item The sequence of groups $F^{G,\Delta}_{\delta}[A]=\{F^G[A]_n\}_{n\geq0}$ with the
 twisted face operations defined by formula~\ref{twisted-face} forms a $\Delta$-group.
\item Suppose that the twisted structure $\delta$ is nonsingular. Then the sequence of
 groups $F^{G}_{\delta}[A]=\{F^G[A]_n\}_{n\geq0}$ with the twisted face operations
  $d^{\delta}_i$ given by formula~\ref{twisted-face} and the twisted degeneracy
   operations $s_i^{\delta}$ given by formula~\ref{twisted-degeneracy} forms a simplicial
    group.
\end{enumerate}
\label{monoid}
\end{prop}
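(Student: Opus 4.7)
The plan is to verify the $\Delta$-identity and the remaining simplicial identities on a set of generators and then extend by multiplicativity. Since $F^G[A]_n$ is defined as a quotient of the free product $\ast_{x\in S_n(A)}(G_n)_x$ and both $d_i^\delta$ and $s_i^\delta$ are defined to be group homomorphisms, it suffices to check each identity on a typical generator $g_x \in (G_n)_x$ with $x = v_0\alpha_1 v_1\cdots\alpha_n v_n$ a non-basepoint element.

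For part (1), I would compute both $d_i^\delta d_j^\delta(g_x)$ and $d_j^\delta d_{i+1}^\delta(g_x)$ (with $i\geq j$) by applying formula~(\ref{twisted-face}) twice. Using that every $\delta_v\in\End(G)$ commutes with each face of $G$, and that deleting $v_j$ shifts the vertex originally in position $i+1$ into position $i$, one gets
\[
d_i^\delta d_j^\delta(g_x) = (\delta_{v_{i+1}}\circ \delta_{v_j}(d_i d_j g))_{d_i d_j(x)}
\]
and
\[
d_j^\delta d_{i+1}^\delta(g_x) = (\delta_{v_j}\circ \delta_{v_{i+1}}(d_j d_{i+1} g))_{d_j d_{i+1}(x)}.
\]
Equality of the two expressions then reduces to three facts: the $\Delta$-identity in $S(A)$ applied to $x$, the $\Delta$-identity in $G$ applied to $g$, and the relation $\delta_{v_j}\circ \delta_{v_{i+1}} = \delta_{v_{i+1}}\circ \delta_{v_j}$. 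The last is exactly~(\ref{equation1.1}), which applies because $v_j$ and $v_{i+1}$ coexist in the simplex (respectively, composable path) $x$, so there is an edge (respectively, an arrow) between them in $A$.

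For part (2), assuming non-singularity of $\delta$, I would check the remaining simplicial identities $s_i^\delta s_j^\delta = s_{j+1}^\delta s_i^\delta$ for $i\leq j$, $d_i^\delta s_j^\delta = s_{j-1}^\delta d_i^\delta$ for $i<j$, $d_j^\delta s_j^\delta = d_{j+1}^\delta s_j^\delta = \id$, and $d_i^\delta s_j^\delta = s_j^\delta d_{i-1}^\delta$ for $i>j+1$. In each case, iterating~(\ref{twisted-face}) and~(\ref{twisted-degeneracy}) on $g_x$ yields an element of a copy group whose group component is a composite of the form $\delta_{w_1}^{\pm 1}\circ \delta_{w_2}^{\pm 1}$ applied to the corresponding untwisted composite of $g$, indexed by the corresponding untwisted composite applied to $x$. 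As in part (1), both sides of each identity produce the same unordered pair $\{w_1,w_2\}\subset\{v_0,\ldots,v_n\}$, so~(\ref{equation1.1}) rearranges the two $\delta$-factors, and the already-known simplicial identities in $G$ and $S(A)$ finish the verification. For the degenerate identities $d_j^\delta s_j^\delta = d_{j+1}^\delta s_j^\delta = \id$, both factors sit at the vertex $v_j$, and the resulting cancellation $\delta_{v_j}\circ \delta_{v_j}^{-1} = \id$ is precisely the point where invertibility of each $\delta_v$ is needed.

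The main obstacle is purely bookkeeping: tracking how the vertex occupying a prescribed position of $x$ shifts under successive face/degeneracy operators, so as to confirm that the two $\delta$-twists produced by either side of a simplicial identity are indexed by the same unordered pair of vertices of $x$. Once this combinatorial matching is in place, the commuting rule~(\ref{equation1.1}) together with the simplicial (and $\Delta$-) identities already known for $G$ and $S(A)$ deliver the result with no further input.
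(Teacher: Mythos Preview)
Your proposal is correct and follows essentially the same approach as the paper: verify each simplicial identity on generators $g_x$, using that $\delta_v$ commutes with the faces and degeneracies of $G$, that the simplicial identities already hold in $G$ and $S(A)$, and that the commuting rule~(\ref{equation1.1}) applies because the relevant pair of vertices lies in a common simplex or path. The only point you do not mention explicitly, and which the paper does check, is that $d_i^\delta$ and $s_i^\delta$ descend to the quotient by $(G_n)_{a_0^n}$; this is immediate since the basepoint $a_0^n$ is sent to $a_0^{n\pm 1}$ by faces and degeneracies.
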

\begin{proof}
The proof is given by examining the $\Delta$-identity for assertion (1), and simplicial
 identities for assertion (2). Since the examination on the $\Delta$-identity for
  assertion (1) is part of the simplicial identities for assertion (2), we only need to
   prove assertion (2) with paying attention that the invertibility of $\delta_v$ is not
    required in the $\Delta$-identity for face operations.

For the  basepoint $a^n_0$ in $S_n$, we see that each $d^{\delta}_i$ sends
 $(G_n)_{a^n_0}$ into $(G_{n-1})_{a^{n-1}_0}$, so
$$
d^{\delta}_i\colon F^{G}_{\delta}[A]_n\to F^{G}_{\delta}[A]_{n-1}
$$
is a well-defined homomorphism. The same is true for
$$
s^{\delta}_i\colon F^{G}_{\delta}[A]_n\to F^{G}_{\delta}[A]_{n+1}.
$$

Now we show that the simplicial-identity holds for the face and degeneracy operations.
 For simplicity of notations, we only consider the case $S=S(K)$. For the case
 $S(\calC)$ of a category $\calC$, the proof follows from the same lines with keeping
 in mind the following rules:
\begin{enumerate}
\item[1)] When a vertex $v_i$ is removed from the sequence $(v_0,\ldots,v_n)$, the
arrow from $v_{i-1}\to v_{i+1}$ in the sequence $(v_0,\ldots,v_{i-1},v_{i+1},\ldots,v_n)$
 is given by the  composition $v_{i-1}\to v_i\to v_{i+1}$.
\item[2)]  When a vertex $v_i$ is doubled from the sequence $(v_0,\ldots,v_n)$, we
insert the trivial path $e_{v_i}\colon v_i\to v_i$ in the sequence
$$(v_0,\ldots,v_{i-1},v_i,v_i, v_{i+1},\ldots,v_n).$$
\end{enumerate}

Let $g_{(v_0,\ldots,v_n)}\in (G_n)_{(v_0,\ldots,v_n)}$, and let $i\geq j$ with
$0\leq j\leq n-1$ and $0\leq i\leq n$. Then
$$
\begin{array}{rcl}
d^{\delta}_i\circ d^{\delta}_j(g_{(v_0,\ldots,v_n)})&=&d^{\delta}_i(\delta_{v_j}^{n-1}(d_jg)_{(v_0,\ldots,v_{j-1},v_{j+1},\ldots,v_n)})\\
&=&d^{\delta}_i(d_j(\delta_{v_j}^{n}(g))_{(v_0,\ldots,v_{j-1},v_{j+1},\ldots,v_n)})\\
&=&(\delta^{n-2}_{v_{i+1}}(d_i(d_j(\delta_{v_j}^{n}(g)))))_{(v_0,\ldots,v_{j-1},v_{j+1},\ldots,v_i,v_{i+2},\ldots,v_n)}\\
&=&(d_i(d_j(\delta^{n}_{v_{i+1}}(\delta_{v_j}^{n}(g)))))_{(v_0,\ldots,v_{j-1},v_{j+1},\ldots,v_i,v_{i+2},\ldots,v_n)}\\
&=&(d_j(d_{i+1}(\delta^{n}_{v_{j}}(\delta_{v_{i+1}}^{n}(g)))))_{(v_0,\ldots,v_{j-1},v_{j+1},\ldots,v_i,v_{i+2},\ldots,v_n)}\\
&=&(d_j(\delta^{n-1}_{v_{j}}(d_{i+1}(\delta_{v_{i+1}}^{n}(g)))))_{d_jd_{i+1}(v_0,\ldots,v_n)}\\
&=&(\delta^{n-2}_{v_{j}}(d_j(\delta_{v_{i+1}}^{n-1}(d_{i+1}(g)))))_{d_jd_{i+1}(v_0,\ldots,v_n)}\\
&=&d^{\delta}_j\circ d^{\delta}_{i+1}(g_{(v_0,\ldots,v_n)}),
\end{array}
$$
where $\delta^n_{v_{i+1}}\circ \delta^n_{v_j}=\delta^n_{v_j}\circ \delta^n_{v_{i+1}}$ because $v_jv_{i+1}$ forms a $1$-simplex in $K$. (In the case of $S=S(\calC)$, $\delta^n_{v_{i+1}}\circ \delta^n_{v_j}=\delta^n_{v_j}\circ \delta^n_{v_{i+1}}$ because there is an arrow given by a composition from $v_j$ to $v_{i+1}$.)

Let $i\geq j$, and let $g_{(v_0,\ldots,v_n)}\in (G_n)_{(v_0,\ldots,v_n)}$. We shorten the notation $s^{\delta}_i$ as $s_i$. Then
$$
\begin{array}{rcl}
s_js_i(g_{(v_0,\ldots,v_n)})&=&s_j((\delta_{v_i}^{n+1})^{-1}(s_i(g))_{(v_0,\ldots,v_i,v_i,\ldots,v_n)})\\
&=&s_j(s_i((\delta_{v_i}^{n})^{-1}(g))_{(v_0,\ldots,v_i,v_i,\ldots,v_n)})\\
&=&(\delta_{v_j}^{n+2})^{-1}(s_js_i((\delta_{v_i}^{n})^{-1}(g)))_{(v_0,\ldots,v_j,v_j,\ldots,v_i,v_i,\ldots,v_n)}\\
&=&(s_js_i(\delta_{v_j}^{n})^{-1}((\delta_{v_i}^{n})^{-1}(g)))_{(v_0,\ldots,v_j,v_j,\ldots,v_i,v_i,\ldots,v_n)}\\
&=&(s_js_i(\delta_{v_i}^{n})^{-1}((\delta_{v_j}^{n})^{-1}(g)))_{(v_0,\ldots,v_j,v_j,\ldots,v_i,v_i,\ldots,v_n)}\\
&=&(\delta_{v_i}^{n+2})^{-1}(s_{i+1}((\delta_{v_j}^{n+1})^{-1} s_j(g)))_{(v_0,\ldots,v_j,v_j,\ldots,v_i,v_i,\ldots,v_n)}\\
&=&s_{i+1}s_j(g_{(v_0,\ldots,v_n)}).\\
\end{array}
$$
We shorten the notation $d^{\delta}_i$ as $d_i$. Let $i<j$, and let $g_{(v_0,\ldots,v_n)}\in (G_n)_{(v_0,\ldots,v_n)}$. Then
$$
\begin{array}{rcl}
d_is_j(g_{(v_0,\ldots,v_n)})&=&d_i((\delta_{v_j}^{n+1})^{-1}(s_j(g))_{(v_0,\ldots,v_j,v_j,\ldots,v_n)})\\
&=&d_i((s_j(\delta_{v_j}^{n})^{-1}(g)_{(v_0,\ldots,v_j,v_j,\ldots,v_n)})\\
&=&\delta_{v_i}^n(d_i(s_j(\delta_{v_j}^{n})^{-1}(g)))_{(v_0,\ldots,v_{i-1},v_{i+1},\ldots, v_j,v_j,\ldots,v_n)}\\
&=&(d_is_j\delta_{v_i}^n(\delta_{v_j}^{n})^{-1}(g))_{(v_0,\ldots,v_{i-1},v_{i+1},\ldots, v_j,v_j,\ldots,v_n)}\\
&=&(s_{j-1}d_i(\delta_{v_j}^n)^{-1}(\delta_{v_i}^{n})(g))_{(v_0,\ldots,v_{i-1},v_{i+1},\ldots, v_j,v_j,\ldots,v_n)}\\
&=&((\delta_{v_j}^n)^{-1}(s_{j-1}(\delta_{v_i}^{n-1}(d_i(g)))))_{(v_0,\ldots,v_{i-1},v_{i+1},\ldots, v_j,v_j,\ldots,v_n)}\\
&=&s_{j-1}d_i(g_{(v_0,\ldots,v_n)}).
\end{array}
$$
Let $i=j$, and let $g_{(v_0,\ldots,v_n)}\in (G_n)_{(v_0,\ldots,v_n)}$. Then
$$
\begin{array}{rcl}
d_js_j(g_{(v_0,\ldots,v_n)})&=&d_j((\delta_{v_j}^{n+1})^{-1}((s_j(g)))_{(v_0,\ldots,v_j,v_j,\ldots,v_n)})\\
&=&d_j((s_j(\delta_{v_j}^{n})^{-1}(g))_{(v_0,\ldots,v_j,v_j,\ldots,v_n)})\\
&=&(\delta_{v_j}^n(d_js_j(\delta_{v_j}^{n})^{-1}(g)))_{(v_0,\ldots,v_n)}\\
&=&(d_js_j(\delta_{v_j}^n(\delta_{v_j}^{n})^{-1}(g)))_{(v_0,\ldots,v_n)}\\
&=&g_{(v_0,\ldots,v_n)}.\\
\end{array}
$$
Let $i=j+1$, and let $g_{(v_0,\ldots,v_n)}\in (G_n)_{(v_0,\ldots,v_n)}$.
We remind that the rule is such that we add the twisting of the vertex we are removing.
When we apply $d_{j+1}$, we should add twisting $\delta_{v_j}$ because the second
$v_j$ is located in the position $(j+1)$ under removing.
Then we have
$$
\begin{array}{rcl}
d_{j+1}s_j(g_{(v_0,\ldots,v_n)})&=&d_{j+1}(\delta_{v_j}^{n+1})^{-1}(s_j(g)))_{(v_0,\ldots,v_j,v_j,\ldots,v_n)})\\
&=&d_{j+1}((s_j(\delta_{v_j}^{n})^{-1}(g)_{(v_0,\ldots,v_j,v_j,\ldots,v_n)})\\
&=&\delta_{v_j}^n(d_{j+1}s_j(\delta_{v_j}^{n})^{-1}(g))_{(v_0,\ldots,v_n)}\\
&=&(d_{j+1}s_j(\delta_{v_j}^n(\delta_{v_j}^{n})^{-1}(g)))_{(v_0,\ldots,v_n)}\\
&=&g_{(v_0,\ldots,v_n)}.\\
\end{array}
$$
Let $i>j+1$, and let $g_{(v_0,\ldots,v_n)}\in (G_n)_{(v_0,\ldots,v_n)}$.
Then
$$
\begin{array}{rcl}
d_is_j(g_{(v_0,\ldots,v_n)})&=&d_i((\delta_{v_j}^{n+1})^{-1}(s_j(g))_{(v_0,\ldots,v_j,v_j,\ldots,v_n)})\\
&=&d_i((s_j(\delta_{v_j}^{n})^{-1}(g)_{(v_0,\ldots,v_j,v_j,\ldots,v_n)})\\
&=&\delta_{v_{i-1}}^n(d_i(s_j(\delta_{v_j}^{n})^{-1}(g)))_{(v_0,\ldots,v_j,v_j,\ldots,v_{i-2},v_{i},\ldots, v_n)}\\
&=&(d_is_j\delta_{v_{i-1}}^n(\delta_{v_j}^{n})^{-1}(g))_{(v_0,\ldots, v_j,v_j,\ldots,v_{i-2},v_{i},\ldots,v_n)}\\
&=&(s_{j}d_{i-1}(\delta_{v_j}^n)^{-1}(\delta_{v_{i-1}}^{n})(g))_{(v_0,\ldots,v_j,v_j,\ldots,v_{i-2},v_{i},\ldots, v_n)}\\
&=&((\delta_{v_j}^n)^{-1}(s_{j}(\delta_{v_{i-1}}^{n-1}(d_{i-1}(g)))))_{(v_0,\ldots, v_j,v_j,\ldots,v_{i-2},v_{i},\ldots,v_n)}\\
&=&s_{j}d_{i-1}(g_{(v_0,\ldots,v_n)}).
\end{array}
$$
\end{proof}

\section{Homotopy Type of $F^{\mathcal{G}}_{\delta}[S]$}\label{section4}

\subsection{The $\delta$-twisted Cartesian  products}\label{subsection4.1}

We generalize now the notion of twisted structure for a simplicial set, not
necessary
a simplicial group $G$. Let $Y$ be a simplicial set. We denote by $\End(Y)$  the monoid of self-simplicial maps of $Y$.
Let $A$ be a simplicial complex or a category with vertex set $V(A)$.  A \textit{twisted structure} on $A$ in a simplicial set $Y$ is a function
$$
\delta\colon V(A)\longrightarrow \End(Y), \quad v\mapsto \delta_v
$$
such that the commuting rule
\begin{equation}\label{commuting-rule2}
\delta_v\circ\delta_w=\delta_w\circ\delta_v
\end{equation}
holds if there is an edge joint $v$ and $w$ in the case of simplicial complexes and arrow between $v$ and $w$ in the case of category. A twisted structure on $A$ is called \textit{nonsingular} if $\delta_v\colon Y\to Y$ is a simplicial isomorphism for each $v\in V(A)$.

Let $S=S(A)$ be a simplicial set defined in section~\ref{section2}.
Given any twisted structure $\delta$ on $A$  in a simplicial set $Y$, we can proceed the following construction of \textit{$\delta$-twisted Cartesian product}:
\begin{enumerate}
\item The sequence of sets is given by $Y_n\times S_n$ for $n\geq 0$.
\item The faces $d_i\colon Y_n\times S_n \to Y_{n-1}\times S_{n-1}$, $0\leq i\leq n$,
are defined by the formula
\begin{equation}\label{face-rule}
d_i(y, v_0\alpha_1v_1\cdots\alpha_nv_n)=(\delta_{v_i}(d_iy),
 d_i(v_0\alpha_1v_1\cdots\alpha_nv_n).
\end{equation}
\item Suppose that the twisted structure $\delta$ is nonsingular. We define the
 degeneracy operation $s_i\colon Y_n\times S_n\to Y_{n+1}\times S_{n+1}$, $0\leq i\leq
  n$, by the formula
\begin{equation}\label{degeneracy-rule}
s_i(y, v_0\alpha_1v_1\cdots\alpha_nv_n)=(\delta_{v_i}^{-1}(s_iy),
 s_i(v_0\alpha_1v_1\cdots\alpha_nv_n).
\end{equation}
\end{enumerate}

\begin{rem}
The definition of our $\delta$-twisted Cartesian product is different from that the classical twisted Cartesian product in simplicial theory~\cite{BGM, Curtis}.
\end{rem}

\begin{prop}\label{twisted-Cartesian}
Let $S=S(A)$ for $A$
 with a twisted structure $\delta$ in a simplicial set $Y$. Then
\begin{enumerate}
\item The sequence of sets $Y\times_{\delta,\Delta}S=\{Y_n\times S_n\}_{n\geq0}$ with the face operations defined by~(\ref{face-rule}) forms a $\Delta$-set.
\item Suppose that the twisted structure $\delta$ is nonsingular. Then the sequence of sets $Y\times_{\delta}S=\{Y_n\times S_n\}_{n\geq 0}$ with the face operations defined in~(\ref{face-rule}) and the degeneracy operations defined by~(\ref{degeneracy-rule}) forms a simplicial set.
\end{enumerate}
\end{prop}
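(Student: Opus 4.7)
The plan is to verify the $\Delta$-identity and the simplicial identities directly, coordinate by coordinate, on pairs $(y,x) \in Y_n \times S_n$. The $S$-coordinate satisfies every required identity automatically because $S = S(A)$ is already a simplicial set; the substance of the proof lies in tracking the composition of $\delta$-twists that accumulates in the $Y$-coordinate. Since the argument for the free-product construction in Proposition~\ref{monoid} was carried out vertex-by-vertex on a single copy $(G_n)_x$, the same computations specialize to the present setting essentially unchanged — indeed, this proof is simpler, because there is no free product to factor through and $Y$ need not be a group.

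First I would handle assertion~(1). For $i \geq j$ and a pair $(y, (v_0,\ldots,v_n))$, I would compute $d_i d_j(y, x)$ and $d_j d_{i+1}(y,x)$ using the formula~(\ref{face-rule}). On the $S$-coordinate the equality reduces to the known $\Delta$-identity for $S$; after removal of $v_j$ the vertex originally labelled $v_{i+1}$ sits in position $i$ of the truncated sequence, so applying $d_i$ contributes a twist by $\delta_{v_{i+1}}$. The $Y$-coordinate therefore becomes $\delta_{v_{i+1}}(d_i(\delta_{v_j}(d_j y)))$, which, using that face operations in $Y$ satisfy the ordinary $\Delta$-identity and that endomorphisms of $Y$ commute with them in the obvious indexed way, should be rewritten as $\delta_{v_{i+1}} \delta_{v_j}(d_j d_{i+1} y)$. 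The symmetric computation of $d_j d_{i+1}(y,x)$ gives $\delta_{v_j} \delta_{v_{i+1}}(d_j d_{i+1} y)$, and the two agree by the commuting rule~(\ref{commuting-rule2}), which applies because $v_j$ and $v_{i+1}$ both lie in the simplex $\{v_0,\ldots,v_n\}$ of $K$ (or, in the category case, there is an arrow from $v_j$ to $v_{i+1}$ obtained by composing $\alpha_{j+1}\cdots\alpha_{i+1}$).

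For assertion~(2) I would verify in turn the identities $s_i s_j = s_{j+1} s_i$ for $i \leq j$, $d_i s_j = s_{j-1} d_i$ for $i < j$, $d_j s_j = d_{j+1} s_j = \mathrm{id}$, and $d_i s_j = s_j d_{i-1}$ for $i > j+1$. Each is a straightforward calculation using~(\ref{face-rule}) and~(\ref{degeneracy-rule}): the $S$-coordinate handles itself, and on the $Y$-coordinate one accumulates a product of factors of the form $\delta_v$ and $\delta_v^{-1}$ whose indices are determined by the vertex sitting at the position being removed or doubled. In the mixed identities $d_i s_j$ for $i \neq j, j+1$, one obtains on one side a $\delta_{v_i}\delta_{v_j}^{-1}$ composition and on the other side $\delta_{v_j}^{-1}\delta_{v_i}$; these are equal again by the commuting rule. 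In the degenerate cases $i = j$ and $i = j+1$, the vertex doubled and the vertex removed are the same $v_j$, so the factors $\delta_{v_j}$ and $\delta_{v_j}^{-1}$ cancel, recovering the identity; nonsingularity of $\delta$ is precisely what is needed here.

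The main obstacle is purely bookkeeping: keeping straight which position in the reindexed sequence a given vertex occupies after an iterated face or degeneracy, and therefore which $\delta_v$ is applied at that stage. Once the positions are tracked correctly, the commuting rule~(\ref{commuting-rule2}) for pairs of vertices inside a common simplex, together with the invertibility of $\delta_v$ for part~(2), closes every identity without further input. No new ideas beyond those already used in the proof of Proposition~\ref{monoid} are required; in fact the present proposition can be viewed as the ``underlying set'' statement of which Proposition~\ref{monoid} is the group-valued refinement.
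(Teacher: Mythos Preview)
Your proposal is correct and follows exactly the approach indicated by the paper, which simply refers the reader back to the computations in the proof of Proposition~\ref{monoid}. Your writeup is in fact a more explicit rendering of precisely those computations, specialized to the Cartesian-product setting.
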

\begin{proof}
The assertions follow from the lines in the proof of Proposition~\ref{proposition3.1} for examining the $\Delta$-identity and the simplicial identities.
\end{proof}

The first assertion of the above proposition is sufficient for having homology
 (cohomology) with twisted coefficients. Let $A$
have a twisted structure $\delta$ in a simplicial set $Y$. Let $G$ be an abelian group.
 The \textit{twisted homology} (\textit{twisted cohomology}) of $A$ with coefficients
 in $G$ is defined by
$$
H_n^{\twist}(A;G)=H_n(G(Y\times_{\delta,\Delta}S(A))), \ H^n_{\twist}(A;G)=H^n(G(Y\times_{\delta,\Delta}S(A)))
$$
for $n\geq0$, where $G(X)$ for a simplicial set $X$ is given by
$G(X)=\oplus_{x\in X}(G)_x$ with $(G)_x$ a copy of $G$ labelled by $x$.
A consequence of (1) of Proposition~(\ref{twisted-Cartesian}) is the following corollary.
\begin{cor}
Let $A$
have a twisted structure $\delta$ in a simplicial set $Y$.  Then the twisted homology
 $H_*^{\twist}(A;G)$ (twisted cohomology $H^*_{\twist}(A;G)$) is the homology
 (cohomology) of the chain complex $C^{\twist}_*(A)$ with
coefficients in $G$, where
$$
C_n^{\twist}(A)=\Z(Y_n)\otimes \Z(S(A)_n)
$$
with the differential $\partial^{\twist}_n\colon C_n^{\twist}(K)\to C_{n-1}^{\twist}(K)$
 given by the formula
$$
\partial_n(y\otimes v_0\alpha_1v_1\cdots\alpha_nv_n)=\sum_{i=0}^n(-1)^i\delta_{v_i}
(d_iy)\otimes d_i(v_0\alpha_1v_1\cdots\alpha_nv_n)
$$
for $n\geq0$.\hfill $\Box$
\end{cor}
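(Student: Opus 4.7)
The plan is to unpack the definition of $H_*^{\twist}(A;G)$ and observe that it is already, essentially by construction, the homology of the stated chain complex; there is nothing deep here beyond recognizing the standard alternating-sum passage from a $\Delta$-abelian group to a chain complex. First I would invoke Proposition~\ref{twisted-Cartesian}(1), which guarantees that $Y\times_{\delta,\Delta}S(A)$ is a $\Delta$-set with face maps given by~(\ref{face-rule}). Applying the free abelian group functor $\Z(-)$ degreewise, and then tensoring with $G$, yields a $\Delta$-abelian group whose level $n$ term is $G(Y_n\times S(A)_n)$. By definition $H_n^{\twist}(A;G)$ is precisely the homology of this $\Delta$-abelian group.

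Next I would identify $\Z(Y_n\times S(A)_n)\cong \Z(Y_n)\otimes\Z(S(A)_n)$ via the standard bilinear pairing sending $(y,x)$ to $y\otimes x$, under which tensoring with $G$ gives the level $n$ group $C_n^{\twist}(A)$ of the chain complex in the statement. Thus the underlying graded group of $G(Y\times_{\delta,\Delta}S(A))$ agrees with that of $C_*^{\twist}(A)\otimes G$.

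It then remains to match the differentials. By the standard construction, the differential on a $\Delta$-abelian group is the alternating sum $\partial_n=\sum_{i=0}^n(-1)^id_i$ of the $\Delta$-face homomorphisms, and the $\Delta$-identity $d_id_j=d_jd_{i+1}$ for $i\geq j$ (Proposition~\ref{twisted-Cartesian}(1)) yields $\partial_{n-1}\circ\partial_n=0$ by the usual sign-cancellation argument. Linearizing the face formula~(\ref{face-rule}), the $i$-th face sends $y\otimes v_0\alpha_1v_1\cdots\alpha_nv_n$ to $\delta_{v_i}(d_iy)\otimes d_i(v_0\alpha_1v_1\cdots\alpha_nv_n)$, so the alternating sum is exactly the formula for $\partial_n^{\twist}$ displayed in the corollary.

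The only point requiring any care is the bookkeeping in the tensor identification, to make sure that the $G$-coefficients ride along as expected and that the twist is applied to the $Y$-factor rather than the $S(A)$-factor; once the isomorphism $G(Y_n\times S(A)_n)\cong \Z(Y_n)\otimes\Z(S(A)_n)\otimes G$ is fixed, everything reduces to reading off formula~(\ref{face-rule}). Since the substantive $\Delta$-identity verification was already carried out in Proposition~\ref{proposition3.1} and reused in Proposition~\ref{twisted-Cartesian}, no further combinatorial work on the $\delta_{v_i}$'s is needed here, and the corollary follows.
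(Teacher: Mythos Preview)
Your proposal is correct and matches the paper's approach exactly: the paper presents this corollary as an immediate consequence of Proposition~\ref{twisted-Cartesian}(1) (indicated by the terminal $\Box$ with no written proof), and your argument simply spells out that unpacking---identifying $G(Y\times_{\delta,\Delta}S(A))$ with $C_*^{\twist}(A)\otimes G$ and reading off the differential as the alternating sum of the faces from~(\ref{face-rule}).
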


From the second assertion of Proposition~\ref{twisted-Cartesian},  there is a connection
 between $\delta$-twisted Caretesian products and  simplicial fibre bundles
 which is described as follows. Let $S=S(A)$
 be with a nonsingular twisted structure $\delta$ in a simplicial set $Y$. Observe that
  the coordinate projection
\begin{equation}\label{projection-map}
p\colon Y\times_{\delta}S\to S, \quad (g,v_0\alpha_1v_1\cdots\alpha_nv_n))\mapsto v_0\alpha_1v_1\cdots\alpha_nv_n)
\end{equation}
 is a simplicial map. Recall from~\cite[page. 155]{Curtis} that a simplicial map $p\colon E\to B$ is called a \textit{simplicial fibre bundle} with fibre $F$ if for each $b\in B_n$, there is a commutative diagram
 \begin{equation}\label{fibre-bundle}
\begin{diagram}
\Delta[n]\times F&\rTo^{\alpha_b}_{\cong}& E' &\rTo^{\tilde f_b}& E\\
 \dTo>{\mathrm{proj.}}    &  &\dTo&\textrm{pull-back}&\dTo\\
\Delta[n]     &\rEq &\Delta[n]&\rTo^{f_b}&B,\\
\end{diagram}
\end{equation}
where $\Delta[n]$ is the standard simplicial $n$-simplex,  $f_b\colon \Delta[n]\to B$ is the representing map of $b\in B_n$ and $\alpha_b$ is a simplicial isomorphism.

\begin{thm}\label{theorem-fibre-bundle}
Let $S=S(A)$
 be with a non-singular twisted structure $\delta$ in a simplicial set $Y$.  Then
 the coordinate projection map
$$p\colon Y\times_{\delta}S\to S$$ is a simplicial fibre bundle with fibre $Y$.
\end{thm}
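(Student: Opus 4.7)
The plan is to verify the fibre bundle condition by constructing, for each simplex $b=v_0\alpha_1v_1\cdots\alpha_nv_n\in S_n$, an explicit trivialisation $\alpha_b\colon \Delta[n]\times Y \to E'$, where $E'$ is the pullback of $p$ along the representing map $f_b\colon\Delta[n]\to S$. Described concretely, the $k$-simplices of $E'$ are pairs $(y,\sigma)$ with $y\in Y_k$ and $\sigma=(j_0,\ldots,j_k)\in\Delta[n]_k$, and the operations inherited from $Y\times_\delta S$ are
$$
d_i(y,\sigma)=(\delta_{v_{j_i}}(d_iy),\,d_i\sigma),\qquad s_i(y,\sigma)=(\delta_{v_{j_i}}^{-1}(s_iy),\,s_i\sigma),
$$
because $f_b(\sigma)$ carries $v_{j_i}$ in its $i$-th slot; on the product $\Delta[n]\times Y$ the corresponding operations are the untwisted ones.

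The trivialisation is built from a ``correction'' endomorphism $\Phi_\sigma$ of $Y$, one for each $\sigma\in\Delta[n]_k$, given by
$$
\Phi_\sigma \;=\; \prod_{l=0}^{n}\delta_{v_l}^{\,1-\#\{t\,:\,j_t=l\}},\qquad \alpha_b(\sigma,y)=(\Phi_\sigma(y),\,\sigma).
$$
The product is order-independent because any two $\delta_{v_l},\delta_{v_{l'}}$ commute by~(\ref{commuting-rule2}) applied to the edge, respectively composable pair of arrows, between $v_l$ and $v_{l'}$ inside the simplex $b$, and the negative exponents make sense by the nonsingularity of $\delta$. A quick multiplicity count yields the transition relations
$$
\Phi_{d_i\sigma}=\delta_{v_{j_i}}\circ\Phi_\sigma,\qquad \Phi_{s_i\sigma}=\delta_{v_{j_i}}^{-1}\circ\Phi_\sigma.
$$
Combined with the fact that each $\delta_{v_l}$, and therefore $\Phi_\sigma$, is a simplicial self-map of $Y$ and hence commutes with $d_i$ and $s_i$, these relations show that $\alpha_b$ is a simplicial map. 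It is a levelwise bijection because each $\Phi_\sigma$ is invertible, so it is a simplicial isomorphism; and its second coordinate is the identity on $\Delta[n]$, so the projection square in~(\ref{fibre-bundle}) commutes, with fibre $Y$.

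The main obstacle is discovering the right $\Phi_\sigma$. Starting from the non-degenerate top simplex $\iota_n=(0,1,\ldots,n)$ and setting $\Phi_{\iota_n}=\id$, the desired relation $\Phi_{d_i\sigma}=\delta_{v_{j_i}}\Phi_\sigma$ forces one factor $\delta_{v_l}$ for each vertex $v_l$ removed and, dually, one factor $\delta_{v_l}^{-1}$ for each vertex $v_l$ doubled, which is exactly the multiplicity formula above. Once this form is in hand, the verification is the same style of bookkeeping as in the proof of Proposition~\ref{proposition3.1}, with the commuting rule~(\ref{commuting-rule2}) and the nonsingularity of $\delta$ each playing an essential role.
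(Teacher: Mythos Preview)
Your proof is correct and takes essentially the same approach as the paper. Your correction automorphism $\Phi_\sigma=\prod_{l}\delta_{v_l}^{\,1-\#\{t:j_t=l\}}$ is exactly the paper's formula $\delta_{(j_t)}^{-l_t+1}\cdots\delta_{(j_1)}^{-l_1+1}\delta_{(i_1)}\cdots\delta_{(i_s)}$ rewritten as a single product (missing vertices contribute exponent $1$, present ones $1-l_i$), and both verifications hinge on the same transition relations $\Phi_{d_i\sigma}=\delta_{v_{j_i}}\Phi_\sigma$ and $\Phi_{s_i\sigma}=\delta_{v_{j_i}}^{-1}\Phi_\sigma$; the paper's intermediate passage through $Y\times_\delta\Delta[n]$ is absorbed into your direct description of the pullback $E'$.
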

\begin{proof}
Let $\sigma=(0,1,\ldots,n)\in \Delta[n]_n$ be the standard non-degenerate  $n$-simplex.
 For each $b=v_0\alpha_1v_1\cdots\alpha_nv_n\in S_n$,  we have $f_b(\sigma)=b$ by
  definition. By taking iterated faces, we have $f_b(i)=v_i$. The twisted structure
   on $A$
   induces a function
$$
\delta\colon V(\Delta[n])=\{(0),(1),\ldots,(n)\}\longrightarrow \End(Y)
 $$
 given by
$$
\delta_{(i)}=\delta_{v_i}
$$
for $0\leq i\leq n$. For $0\leq i<j\leq n$, since there is an edge between $v_i$ and $v_j$ or an arrow 
$v_i\to v_j$, we have
$$
\delta_{(i)}\circ \delta_{(j)}=\delta_{(j)}\circ\delta_{(i)}
$$
by the commuting rule
$\delta_{v_i}\circ\delta_{v_j}=\delta_{v_j}\circ\delta_{v_i}$ and so a twisted
structure on $\Delta[n]$ in $Y$. From the definition of $\delta$-twisted Caretesian product, we have the simplicial map
$$
(\id, f_b)\colon Y\times_{\delta}\Delta[n]\longrightarrow E=Y\times_{\delta}S
$$
inducing a fibrewise simplicial isomorphism
\begin{diagram}
Y\times_{\delta}\Delta[n]&\rTo& E'\\
\dTo>{p}&&\dTo\\
\Delta[n]&\rEq& \Delta[n],\\
\end{diagram}
where $E'$ is given by the pull-back in diagram~(\ref{fibre-bundle}) with $E=Y\times_{\delta}S$ and $B=S$. Now we show that there is a fibrewise simplicial isomorphism
\begin{diagram}
Y\times\Delta[n]&\rTo^{\alpha_b}_{\cong}&Y\times_{\delta}\Delta[n]\\
\dTo>{p}&&\dTo>{p}\\
\Delta[n]&\rEq&\Delta[n].\\
\end{diagram}
The simplicial isomorphism $\alpha_b$ is constructed by untwisting the twisted faces and degeneracies. More precisely, observe that, each element $w$ in $\Delta[n]_q$ can be uniquely expressed as a monotone sequence
$$
w=(\overbrace{j_1(w),\ldots, j_1(w)}^{l_1(w)},\ldots,\overbrace{j_{t_w}(w),\ldots,j_{t_w}(w)}^{l_{t_w}(w)})
$$
with $l_i(w)\geq 1$ for $1\leq i\leq t$, $0\leq j_1(w)<j_2(w)<\cdots<j_{t_w}(w)\leq n$  and $\sum\limits_{i=1}^{t_w}l_i(w)=q+1$. Let
$$
\{i_1(w),\ldots, i_{s_w}(w)\}=\{0,1,\ldots,n\}\smallsetminus\{j_1(w),j_2(w),\ldots,j_{t_w}(w)\}
$$
with $i_1(w)<i_2(w)<\cdots<i_{s_w}(w)$ be the set of missing vertices in $w$.
Define the function
$$
\alpha_b\colon Y_q\times \Delta[n]_q \longrightarrow Y_q\times\Delta[n]_q
$$
by setting
$$
\alpha_b(y, w)=(\delta_{(j_{t_w}(w))}^{-l_{t_w(w)}+1}\cdots \delta_{(j_1(w))}^{-l_1(w)+1}\delta_{(i_1(w))}\cdots \delta_{(i_{s_w}(w))}(y), w).
$$
Clearly $
\alpha_b\colon Y_q\times \Delta[n]_q \longrightarrow Y_q\times\Delta[n]_q
$ is bijective for $q\geq0$. It suffices to show that $\alpha_b$ is a simplicial map. We shorten the notation $l_i(w)$ as $l_i$, and similarly for other functions on $w$. Note that, for $0\leq i\leq q$,
$$
\begin{array}{rcl}
d^{\delta}_i(\alpha_b(y,w))&=&d^{\delta}_i(\delta_{(j_t)}^{-l_t+1}\cdots \delta_{(j_1)}^{-l_1+1}\delta_{(i_1)}\cdots \delta_{(i_s)}(y), w)\\
&=&(\delta_{(i')}(d_i\delta_{(j_t)}^{-l_t+1}\cdots \delta_{(j_1)}^{-l_1+1}\delta_{(i_1)}\cdots \delta_{(i_s)}(y)),d_i(w))\\
&=&(\delta_{(i')}\delta_{(j_t)}^{-l_t+1}\cdots \delta_{(j_1)}^{-l_1+1}\delta_{(i_1)}\cdots \delta_{(i_s)}(d_i(y)),d_i(w)),\\
\end{array}
$$
where $(i')$ is the $i$-th vertex of the $q$-simplex $w$. Let $k$ be the minimal integer such that
$$
l_1+\cdots+l_k-1\geq i.
$$
Then $i'=j_k$ and
$$
d^{\delta}_i(\alpha_b(y,w))=(\delta_{(j_t)}^{-l_t+1}\cdots\delta_{j_k}^{-l_k+2}\cdots \delta_{(j_1)}^{-l_1+1}\delta_{(i_1)}\cdots \delta_{(i_s)}(d_i(y)),d_i(w)).
$$
Note that
$$
d_iw=(\overbrace{j_1(w),\ldots, j_1(w)}^{l_1(w)},\ldots,\overbrace{j_{k}(w),\ldots,j_{k}(w)}^{l_k(w)-1},\ldots, \overbrace{j_{t_w}(w),\ldots,j_{t_w}(w)}^{l_{t_w}(w)}).
$$
From the definition of $\alpha_b$, we have
$$
\alpha_b(d_i(y),d_i(w))=d^{\delta}_i(\alpha_b(y,w)).
$$
Consider the degeneracy operation $s_i$, $0\leq i\leq q$. We have
$$
s_iw=(\overbrace{j_1(w),\ldots, j_1(w)}^{l_1(w)},\ldots,\overbrace{j_{k}(w),\ldots,j_{k}(w)}^{l_k(w)+1},\ldots, \overbrace{j_{t_w}(w),\ldots,j_{t_w}(w)}^{l_{t_w}(w)}).
$$
Then
$$
\begin{array}{rcl}
s^{\delta}_i(\alpha_b(y,w))&=&s^{\delta}_i(\delta_{(j_t)}^{-l_t+1}\cdots \delta_{(j_1)}^{-l_1+1}\delta_{(i_1)}\cdots \delta_{(i_s)}(y), w)\\
&=&(\delta_{(j_k)}^{-1}(s_i\delta_{(j_t)}^{-l_t+1}\cdots \delta_{(j_1)}^{-l_1+1}\delta_{(i_1)}\cdots \delta_{(i_s)}(y)),s_i(w))\\
&=&(\delta_{(j_k)}^{-1}\delta_{(j_t)}^{-l_t+1}\cdots \delta_{(j_1)}^{-l_1+1}\delta_{(i_1)}\cdots \delta_{(i_s)}(s_i(y)),s_i(w))\\
&=&(\delta_{(j_t)}^{-l_t+1}\cdots\delta_{(j_k)}^{-l_k}\cdots \delta_{(j_1)}^{-l_1+1}\delta_{(i_1)}\cdots \delta_{(i_s)}(s_i(y)),s_i(w))\\
&=&\alpha_b(s_i(y),s_i(w)).\\
\end{array}
$$
\end{proof}

\subsection{The $\delta$-Twisted Smash Products}
Now we consider the pointed constructions.
Let $Y$ be a pointed simplicial set with the base-point $\ast=\{s^n_0\ast\}_{n\geq0}$.
A \textit{reduced twisted structure} on $A$ in $Y$ is a function
$$
\delta\colon V(K)\longrightarrow \End_*(Y), \quad v\mapsto \delta_v
$$
such that the commuting rule~(\ref{commuting-rule2}) holds,  where $\End_*(Y)$ is the
 monoid of base-point-preserving self-simplicial maps of $Y$. A reduced twisted structure
  on $A$ is called \textit{non-singular} if $\delta_v\colon Y\to Y$ is a simplicial
   isomorphism for each $v\in V(A)$.

Suppose that $S=S(A)$
has non-singular reduced twisted structure $\delta$ in a pointed simplicial set $Y$.
Since $\delta_v(\ast)=\ast$ for $v\in V(A)$
there is a canonical inclusion
$$
\ast\times S=\ast\times_{\delta}S \rInto Y\times_{\delta}S.
$$
Let $a_0$ be a vertex of $A$
treated as the base-point with the induced $\delta$-structure by the restriction.
Then there is a canonical simplicial inclusion
$$
Y\times_{\delta} a_0\longrightarrow Y\times_{\delta}S.
$$

The $\delta$-twisted smash product $Y\wedge_{\delta}S$  is then defined as
the simplicial quotient set
\begin{equation}\label{twisted-smash}
Y\wedge_{\delta}S:=(Y\times_{\delta}S)/((Y\times_{\delta}a_0)\cup(\ast\times_{\delta}S)).
\end{equation}

\subsection{The classifying spaces of  the twisted simplicial groups}

Recall that there is a functor $\bar W$ from simplicial groups to simplicial sets, which plays the role of classifying space functor in simplicial theory. The construction of the functor $\bar W$ is briefly reviewed using the terminology of categorical digraphs as follows.

Let $G$ be a group (without assuming simplicial structure). Let $\calG$ be the category with a single vertex $a$ and a collection of arrows labeled by the elements $g\in G$, where the identity  $1=e_a\colon a\to a$  is considered as the trivial path of the vertex $a$. Then $\calG$ is a categorical digraph with the composition operation induced by the multiplication of the group $G$. Let $W(G)=S(\calG)$ defined in section~\ref{section2}.

Now let $G=\{G_n\}_{n\geq0}$ be a simplicial group. The face homomorphisms $d_i\colon G_n\to G_{n-1}$ and the degeneracy homomorphisms $s_i\colon G_n\to G_{n+1}$ induce simplicial face maps $d_i= W(d_i)\colon  W(G_n)\to  W(G_{n-1})$ and simplicial degeneracy maps $s_i= W(s_i)\colon  W(G_n)\to  W(G_{n+1})$ for $0\leq i\leq n$ so that $\{ W(G_{n})\}_{n\geq0}$ is a bi-simplicial set. The simplicial set $\bar W(G)$ is then defined as the diagonal simplicial set of the bi-simplicial set $W(G_*)_*$.

\begin{thm}\label{main-theorem}
Let $S=S(A)$
has a non-singular twisted structure in a simplicial group $G$.
Then there is a natural simplicial map
$$
\theta\colon \bar W(G)\wedge_{\delta} S\longrightarrow \bar W(F^G_{\delta}[S]),
$$
which is a homotopy equivalence after geometric realization, where the twisted structure of $A$
in the simplicial set  $\bar W(G)$ is induced from its twisted structure in $G$
through the functor $\bar W$.
\end{thm}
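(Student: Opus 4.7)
The plan is to define $\theta$ explicitly on $n$-simplices, verify that it is simplicial, and then reduce the homotopy equivalence assertion to the known untwisted Carlsson/Quillen result via a fibrewise untwisting argument based on Theorem~\ref{theorem-fibre-bundle}.

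First I would write down $\theta$ on a typical $n$-simplex represented by $((g_0,g_1,\ldots,g_{n-1}),\sigma)$ of $\bar W(G)\wedge_\delta S$, where $g_i\in G_i$ and $\sigma=v_0\alpha_1v_1\cdots\alpha_nv_n\in S_n$; its image should be an $n$-simplex $(h_0,\ldots,h_{n-1})$ of $\bar W(F^G_\delta[S])$ with $h_i\in F^G_\delta[S]_i$. In the untwisted case $\delta_v\equiv\id$, the Carlsson/Quillen assignment labels each $g_i$ by the appropriate iterated front face of $\sigma$, producing the James/tensor-product comparison map. For the twisted case I would insert the endomorphisms $\delta_{v_j}^{\pm 1}$ dictated by the formulas~(\ref{twisted-face}) and~(\ref{twisted-degeneracy}); this is essentially the only formula which has a chance to intertwine the twisted smash structure on the source with the twisted $\bar W$-structure on the target.

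Next I would check simpliciality by verifying $d^\delta_i\theta=\theta d_i$ and $s^\delta_i\theta=\theta s_i$. The calculations parallel those in the proofs of Proposition~\ref{proposition3.1} and Theorem~\ref{theorem-fibre-bundle}: one moves twisting endomorphisms past ordinary faces and degeneracies of $G$ and $\bar W(G)$, using the commuting rule~(\ref{equation1.1}) to interchange $\delta_{v_i}$ and $\delta_{v_j}$ whenever both vertices lie in a common simplex of $A$ (equivalently, whenever they appear together in a face of $\sigma$). Non-singularity of $\delta$ ensures that the inverse endomorphisms occurring in the degeneracy formulas are well-defined and cancel as needed.

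To prove that $|\theta|$ is a homotopy equivalence, I would reduce to the untwisted case by fibrewise untwisting. Theorem~\ref{theorem-fibre-bundle}, applied to $Y=\bar W(G)$ with the twisted structure induced through the functor $\bar W$, presents $\bar W(G)\times_\delta S\to S$ as a simplicial fibre bundle with fibre $\bar W(G)$; the untwisting trivializations $\alpha_b$ built in that proof then give, over each non-degenerate $n$-simplex of $S$, a simplicial isomorphism between the restriction of $\bar W(G)\wedge_\delta S$ and the ordinary smash $\bar W(G)\wedge(\Delta[n]/\partial\Delta[n])$. A parallel fibrewise analysis on the target identifies $\bar W(F^G_\delta[S])$, cell by cell, with the $\bar W$-classifying spaces of the untwisted constructions $J_G$ on standard simplices, and $\theta$ restricts over each cell to the classical Carlsson/Quillen equivalence $\bar W(G)\wedge(\Delta[n]/\partial\Delta[n])\simeq \bar W(J_G(\Delta[n]/\partial\Delta[n]))$. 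Gluing these local equivalences along a skeletal filtration of $S$ by pushout squares then yields the global weak equivalence. The main obstacle will be this gluing step: one has to verify that the untwisting isomorphisms on individual simplices of $S$ are compatible under the face operations, which themselves carry twisting factors by~(\ref{twisted-face}), so that the fibrewise equivalences descend to the smash product and to $\bar W(F^G_\delta[S])$. This is precisely where the commuting rule~(\ref{equation1.1}) and the non-singularity of $\delta$ will have to be used in a global rather than merely local way.
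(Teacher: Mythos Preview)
Your plan for defining $\theta$ and checking simpliciality is reasonable, but the homotopy-equivalence argument has a genuine gap on the target side. You propose a ``parallel fibrewise analysis'' identifying $\bar W(F^G_\delta[S])$ cell by cell over $S$ with classifying spaces of untwisted $J_G$ constructions. However, $\bar W(F^G_\delta[S])$ is not fibered over $S$ in any sense to which Theorem~\ref{theorem-fibre-bundle} applies: the group $F^G_\delta[S]_n$ is a free product indexed by \emph{all} of $S_n$ simultaneously, and $\bar W$ of it has no natural projection to $S$ along which one could localise and untwist. So the local pieces you want to glue on the target are not well-defined, and the obstacle you flag (compatibility of the untwisting isomorphisms under faces) is not reached because the target-side decomposition does not exist as stated.

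The paper bypasses this entirely by working bisimplicially in the \emph{other} direction. It realises $\bar W(F^G_\delta[S])$ as the diagonal of $(n,q)\mapsto \bar W(F^G_\delta[S]_n)_q$ and $\bar W(G)\wedge_\delta S$ as the diagonal of $T_{n,q}=\bar W(G_n)_q\wedge S_n$, with the twisting placed in the $n$-direction of $T$. For each fixed $n$ the map $\theta_n\colon \bar W(G_n)\wedge S_n\to \bar W(F^G_\delta[S]_n)$ is simply induced by the inclusions $(G_n)_x\hookrightarrow F^G_\delta[S]_n$; no twisting appears in $\theta_n$ itself. Each $\theta_n$ is a homotopy equivalence by the Kan--Thurston/Whitehead fact that $\bar W(G')\vee\bar W(G'')\to\bar W(G'\ast G'')$ is one, and Bousfield--Friedlander then upgrades the levelwise equivalence to an equivalence on diagonals. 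Thus the twisting is absorbed into the bisimplicial structure and never has to be undone; Theorem~\ref{theorem-fibre-bundle} plays no role in the proof.
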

\begin{proof}
Let $v$ be a vertex of $A$. 
By applying the functor $\bar W$ to the simplicial isomorphism $\delta_v\colon G\to G$,
we have a sequence of isomorphisms of pointed simplicial sets
$$
\bar W(\delta^n_v)\colon \bar W(G_n)\longrightarrow \bar W(G_n)
$$
 and the commutative diagram
\begin{diagram}
\bar W(G_{n-1})&\lTo^{\bar W(d_i)}&\bar W(G_n)&\rTo^{\bar W(s_i)}&\bar W(G_{n+1})\\
\dTo>{\bar W(\delta^{n-1}_v)}&&\dTo>{\bar W(\delta^n_v)}&&\dTo>{\bar W(\delta^{n+1}_v)}\\
\bar W(G_{n-1})&\lTo^{\bar W(d_i)}&\bar W(G_n)&\rTo^{\bar W(s_i)}&\bar W(G_{n+1})\\
\end{diagram}
for $0\leq i\leq n$ and $v\in V(A)$.

Recall from the definition that the group $(F^G_{\delta}[S])_n=F^G[S]_n$ is the quotient group of the free product
$
\ast_{x\in S_n} (G_n)_x
$
subject to the relations
$(G_n)_{a_0^{n}}=\{1\}$. For $x\in S_n$, the inclusion $(G_n)_x\rInto F^G[S]_n$
induces a simplicial map
$$
j_x\colon \bar W((G_n)_x)\longrightarrow \bar W((F^G_{\delta}[S])_n)
$$
and so a simplicial map
$$
\tilde\theta_n\colon \bar W(G_n)\times S_n\longrightarrow \bar W((F^G_{\delta}
[S])_n)\quad (y,x)\mapsto j_x(y),
$$
where we consider $S_n$ as a discrete simplicial set with $(S_n)_q=S_n$ for $q\geq0$
and faces and degeneracies being given by the identity map. Since
 $(G_n)_{a_0^{n}}=\{1\}$, we have
$$
\tilde\theta_n(\bar W(G_n)\times\{a_0^n\})=\ast.
$$
Since $j_x(\ast)=\ast$ for any $x\in S_n$, we have
$$
\tilde\theta_n(\ast\times S_n)=\ast
$$
and so $\tilde\theta$ factors through the smash product. Let
$$
\theta_n\colon \bar W(G_n)\wedge S_n\longrightarrow \bar W((F^G_{\delta}[S])_n)
$$
be the resulting simplicial map. Now we let $n$ be varied. Let
$$
x=v_0\alpha_1v_1\cdots\alpha_nv_n\in S_n=(S_n)_q\textrm{ and }
$$
$$
y=ag_1ag_2a\cdots g_qa\in \bar W(G_n)_q
$$
with $g_1,\ldots,g_q\in G_n$.
Then
$$
\theta_n(y\wedge x)=a(g_1)_xa(g_2)_xa\cdots  (g_q)_xa.
$$
Under the twisted faces in the simplicial group $F^G_{\delta}[S]$, we have
$$
\begin{array}{rcl}
\bar W(d^{\delta}_i)(\theta_n(y\wedge x))&=&\bar W(d^{\delta}_i)
(a(g_1)_xa(g_2)_xa\cdots (g_q)_xa\\
&=&a(d^{\delta}_i((g_1)_x))a(d^{\delta}_i((g_2)_x))a\cdots  (d^{\delta}_i((g_q)_x))a\\
&=&a((\delta^{n-1}_{v_i}(d_i(g_1)))_{d_ix})a\cdots  ((\delta^{n-1}_{v_i}
(d_i(g_q)))_{d_ix})a\\
&=&\theta_{n-1}((\bar W(\delta^{n-1}_{v_i})(d_i(y)))\wedge d_i(x))\\
\end{array}
$$
for $0\leq i\leq n$.
Similarly, under the twisted faces in the simplicial group $F^G_{\delta}[S]$, we have
$$
\bar W(s_i^{\delta}(\theta_n(y\wedge x))=
\theta_{n+1}((\bar W((\delta_{v_i}^{n+1})^{-1})(s_i(y)))\wedge s_i(x))).
$$
We define a $\delta$-twisted structure on the sequence of simplicial sets
$$\{\bar W(G_n)\wedge S_n\}_{n\geq 0}$$
by the formulae
$$
d^{\delta}_i(y\wedge x)=(\bar W(\delta^{n-1}_{v_i})(d_i(y)))\wedge d_i(x),
$$
$$
s^{\delta}_i(y\wedge x)=(\bar W((\delta_{v_i}^{n+1})^{-1})(s_i(y)))\wedge s_i(x))
$$
for $0\leq i\leq n$, $x=v_0\alpha_1v_1\cdots\alpha_nv_n\in S_n=(S_n)_q$ and $y\in \bar W(G_n)_q$. Since each $\theta_n$ is injective and $\bar W((F^G_{\delta}[S])_*)_*$ is a bi-simplicial set, the simplicial identities hold for $d^{\delta}_i$ and $s^{\delta}_i$ in $T_{*,*}=\{\bar W(G_n)\wedge S_n\}_{n\geq 0}$ so that $T_{\ast,\ast}$ is a bi-simplicial set with
$$
\theta_*\colon T_{\ast,\ast}\longrightarrow \bar W((F^G_{\delta}[S])_*)_*
$$
being a morphism of bi-simplicial sets.

Recall that, by the Whitehead Theorem~\cite[Proposition 4.3]{Kan-Thurston}, the canonical inclusion $$\bar W(G')\vee \bar W(G'')\to \bar W(G'\ast G'')$$ is a homotopy equivalence after geometric realization for any simplicial groups $G'$ and $G''$. It follows that
$$
\theta_n\colon \bar W(G_n)\wedge S_n\longrightarrow \bar W((F^G_{\delta}[S])_n)
$$
is a homotopy equivalence after geometric realization for $n\geq 0$. According to~\cite{Bousfield}, the geometric realization
$$
|\theta_*|\colon |T_{*,*}|\longrightarrow | \bar W((F^G_{\delta}[S])_*)_*|
$$
is a homotopy equivalence.  Moreover, by~\cite{Bousfield}, the geometric realization
of the diagonal simplicial simplicial associated to a bi-simplicial set $X_{\ast,\ast}$
 is homotopy equivalent to $|X_{\ast,\ast}|$. From the definition of the functor
 $\bar W$, the simplicial set $\bar W(F^G_{\delta}[S])$ is given by the diagonal
  simplicial set of the bi-simplicial set $\bar W((F^G_{\delta}[S])_*)_*$. By the
   definition of $\delta$-twisted smash product, it is evident that the diagonal
    simplicial set of $T_{\ast,\ast}$ is $\bar W(G)\wedge_{\delta}S$.
\end{proof}

\section{Twisted homology of simplicial complexes and  categories}\label{section5}

In this section, we give some remarks on the twisted $\Delta$-groups $F^{G,\Delta}_{\delta}[S]$, where $S=S(A)$
with a twisted structure $\delta$ in a simplicial group $G$. The construction
$F^{G,\Delta}_{\delta}[S]$ seems to be general, which only requires the commuting
 rule~(\ref{equation1.1}), and so one has the homology groups from the chain complexes
  given by $\Z(F^{G,\Delta}_{\delta}[S])$ and $(F^{G,\Delta}_{\delta}[S])^{\ab}$
  (as well as $\Z((\bar W(G)\times_{\delta,\Delta}S)$ in section~\ref{subsection4.1})
  for any twisted structure. On the other hand, the $\Delta$-groups
 $F^{G,\Delta}_{\delta}[S]$ could be very wild in general. For instance, if $\delta_v$
is   the trivial endomorphism of $G$ for any vertex $v$, all twisted face operations
in $F^{G,\Delta}_{\delta}[S]$ become trivial, which concludes that the differentials
in $\Z(F^{G,\Delta}_{\delta}[S])$, $(F^{G,\Delta}_{\delta}[S])^{\ab}$ and
$\Z((\bar W(G)\times_{\delta,\Delta}S)$ are zero maps and so their homology are given by
the chains themselves. This indicates that the homology groups from the chain
complexes given by $\Z(F^{G,\Delta}_{\delta}[S])$, $(F^{G,\Delta}_{\delta}[S])^{\ab}$
and $\Z((\bar W(G)\times_{\delta,\Delta}S)$ may not be homotopy invariants for general
 twisted structures $\delta$. If the twisted structure is nonsingular, the homology
  groups from the chain complexes given by $\Z(F^{G,\Delta}_{\delta}[S])$,
  $(F^{G,\Delta}_{\delta}[S])^{\ab}$ and $\Z((\bar W(G)\times_{\delta,\Delta}S)$
  coincide with  the homology groups from the chain complexes given by the simplicial
   abelian groups $\Z(F^{G}_{\delta}[S])$, $(F^{G}_{\delta}[S])^{\ab}$ and
   $\Z((\bar W(G)\times_{\delta}S)$ by ~\cite[Section 5]{Curtis} and so
    Theorems~\ref{theorem-fibre-bundle} and~\ref{main-theorem} assure that these
     homology groups only depend on the homotopy type of the corresponding fibre bundles.

Some simplicial techniques can be used for understanding the twisted $\Delta$-groups $F^{G,\Delta}_{\delta}[S]$.

\begin{prop}
Let $S=S(K)$ for a simplicial complex $K$ with a twisted structure in a simplicial group $G$. Let $K_1$ and $K_2$ be simplicial
sub-complexes of $K$ such that $K_1\cup K_2=K$ and $K_1\cap K_2\not=\emptyset$.
Let $\delta_{K_i}$ be the restriction of the twisted structure $\delta_K$ on $K_i$.
We choose the basepoint in $K_1\cap K_2$. Then
\begin{enumerate}
\item The $\Delta$-group
$$
F^{G, \Delta}_{\delta}[S(K)]=F^{G,\Delta}_{\delta}[S(K_1)]
\ast_{F^{G,\Delta}_{\delta}[S(K_1\cap K_2)]}F^{G,\Delta}_{\delta}[S(K_2)].
$$
is the free product with amalgamation.
\item Suppose that $G$ is a simplicial abelian group. Then there is a short exact sequence of the chain complexes
$$
F^{G,\Delta}_{\delta}[S(K_1\cap K_2)]^{\ab}\rInto F^{G,\Delta}_{\delta}
[S(K_1)]^{\ab}\oplus F^{G,\Delta}_{\delta}[S(K_1)]^{\ab} \rOnto
F^{G,\Delta}_{\delta}[S(K)]^{ab}
$$
\end{enumerate}
\end{prop}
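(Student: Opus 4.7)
The plan is to establish both assertions levelwise by pushing the combinatorial pushout of the path-sets $S(K_i)$ through the functorial construction $X\mapsto F^G[X]_n$.

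First I would record the underlying pointed-set decomposition. Because $K_1\cup K_2=K$, every simplex of $K$ lies in $K_1$ or in $K_2$, and any simplex lying in both already lies in $K_1\cap K_2$; transporting this through the definition of $S(-)$ for simplicial complexes (which only uses the vertex order inherited from $K$) gives, at each level $n$,
$$S(K)_n=S(K_1)_n\cup S(K_2)_n,\qquad S(K_1)_n\cap S(K_2)_n=S(K_1\cap K_2)_n,$$
compatibly with the face maps $d_i$, and with basepoint $a_0^n$ sitting in the intersection. Thus $S(K)_n$ is a pushout of pointed sets along $S(K_1\cap K_2)_n$.

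For (1), I would use that $X\mapsto \big(\ast_{x\in X}(G_n)_x\big)/((G_n)_{a_0^n}=1)$ is a left adjoint from pointed sets to groups and hence preserves pushouts. Applied levelwise, this identifies $F^G[S(K)]_n$ with the amalgamated free product $F^G[S(K_1)]_n\ast_{F^G[S(K_1\cap K_2)]_n}F^G[S(K_2)]_n$. Because the twisted face $d_i^\delta$ from formula~(\ref{twisted-face}) only uses the endomorphism $\delta_{v_i}$ attached to a vertex of the given path, its restriction to each $F^G[S(K_i)]_n$ coincides with the face built from the restricted structure $\delta_{K_i}$; naturality of pushouts then lifts the levelwise identification to one of $\Delta$-groups.

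For (2), abelianization turns an amalgamated free product of groups into a pushout of abelian groups, equivalently the cokernel of the difference map $x\mapsto (i_{1*}x,-i_{2*}x)$. This yields right exactness and surjectivity of the second map for free; the remaining point is injectivity of the difference map, which follows because the pointed-set inclusions $S(K_1\cap K_2)_n\hookrightarrow S(K_i)_n$ are injective, so a nonzero element supported on $(G)_x$ with $x\in S(K_1\cap K_2)_n$ maps to a nonzero element in each of the two factors. Compatibility with the twisted differentials is automatic since the inclusions produced in (1) are morphisms of $\Delta$-groups and abelianization is functorial. The main potential pitfall is precisely this injectivity step; it rests entirely on the elementary combinatorial identity from the first paragraph, after which both parts are formal consequences of adjointness and abelianization.
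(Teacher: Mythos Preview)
Your proposal is correct and follows essentially the same approach as the paper: both reduce assertion~(1) to the levelwise pointed-set pushout $S(K)_n=S(K_1)_n\cup_{S(K_1\cap K_2)_n}S(K_2)_n$ and then invoke the construction, and deduce~(2) from~(1) by abelianization. Your version is more explicit---framing the free-product step as a left adjoint preserving pushouts and spelling out the injectivity of the difference map in~(2)---whereas the paper simply records the simplex decomposition, asserts that~(1) ``follows from our construction,'' and that~(2) ``follows from assertion~(1) by taking abelianization.''
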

\begin{proof}
Assertion (2) follows from assertion (1) by taking abelianization. For proving
assertion (1), we check that $S(K)=S(K_1)\cup_{S(K_1\cap K_2)}S(K_2)$. By the
definition, the elements in $S(K)_n$ are given by $(v_0,\ldots,v_n)$ with
$v_0\leq v_1\leq\cdots\leq v_n$ such that $\{v_0,\ldots,v_n\}$ form a simplex in $K$.
Since $K=K_1\cup K_2$, the simplex $\{v_0,\ldots,v_n\}$ is either in $K_1$ or $K_2$.
This shows that $S(K)=S(K_1)\cup_{S(K_1\cap K_2)}S(K_2)$. Assertion (1) then follows
from 
our construction.
\end{proof}
The second assertion in the above proposition assures that the Mayer-Vietoris
sequence can be applied in the homology theory given by $H_* (F^{G,\Delta}_{\delta}
[S(K)])^{ab}$ for twisted simplicial complexes. Similar results hold for the case
$S(\calC)$ for   category $\calC$ with twisted structure.

Now let us consider the special case of the twisted $\Delta$-groups of the cones of
simplicial complexes and categories. For a simplicial complex $K$, the cone
$CK=a\ast K$ is defined in the usual way. For a category $\calC$, the cone
$C\calC=a\ast\calC$ is the category obtained by adding an initial vertex $a$ with
assigning a unique arrow $e_{a,v}\colon a\to v$ for each $v\in V(\calC)$ and the
identical arrow $\id_a:a\to a$. Suppose that $\delta\colon V(K)\to \End(G)$ or
$V(\calC)\to \End(G)$ is a twisted structure in a simplicial group $G$. A twisted
structure $\delta_{CK}$ (or $\delta_{C\calC}$) is called a \textit{regular extension} of
the twisted structure $\delta_{K}$ (or $\delta_{\calC}$) if
\begin{enumerate}
\item $\delta_{CK}|_{V(K)}=\delta_{K}$ (or $\delta_{C\calC}|_{V(\calC)}=\delta_{\calC}$)
and
\item $\delta(a)\in \Aut(G)$.
\end{enumerate}
We recall that,  since there is an edge or arrow between $a$ and any other vertex, the commuting rule~(\ref{equation1.1}) of the $\delta$-structure forces that $\delta_a\circ\delta_v=\delta_v\circ\delta_a$ for any vertex $v$.

Recall also that any group $G$ can be considered as a discrete simplicial group with
 $G_n=G$ and faces and degeneracies being the identity map.
\begin{prop}
Let $S=S(a\ast A)$ 
where $A$ is a simplicial complex $K$ or  a   category $\calC$ with a twisted structure given as a regular extension of a twisted structure on $A$
in a group $G$. Then the following chain complexes
$$
\Z(F^{G,\Delta}_{\delta}[S]), \quad F^{G,\Delta}_{\delta}[S]^{\ab}$$
are contractible.
\end{prop}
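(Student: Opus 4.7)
The plan is to construct an explicit chain contraction by lifting the classical cone contraction $x \mapsto a \ast x$ to the twisted $\Delta$-group, inserting a single prefactor $\delta_a^{-1}$ to absorb the twist. Take the cone vertex $a$ as the basepoint $a_0$. In the simplicial complex case, extend the total order so that $a$ is the minimum; in the categorical case, use the unique arrow $e_{a, v_0}\colon a \to v_0$ to the first vertex of any path. In either setting, prepending $a$ defines a map $h\colon S_n \to S_{n+1}$ sending the basepoint $a_0^n$ to $a_0^{n+1}$ and satisfying the $\Delta$-set identities $d_0 h = \id$ and $d_i h = h d_{i-1}$ for $1 \leq i \leq n+1$.

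I would then lift $h$ to group homomorphisms
$$
\tilde h_n \colon F^G[S]_n \longrightarrow F^G[S]_{n+1}, \qquad g_x \longmapsto (\delta_a^{-1} g)_{h(x)},
$$
defined on each free factor $(G_n)_x$. This is well-defined because $\delta_a$ is invertible (by the regular extension hypothesis) and because the basepoint factor $(G_n)_{a_0^n} = \{1\}$ is sent into $(G_{n+1})_{a_0^{n+1}} = \{1\}$. Since $\tilde h_n$ and the twisted faces are all group homomorphisms, it suffices to verify $d_0^\delta \tilde h_n = \id$ and $d_i^\delta \tilde h_n = \tilde h_{n-1} d_{i-1}^\delta$ for $i \geq 1$ on a generator $g_x$ with $x = v_0 \alpha_1 v_1 \cdots \alpha_n v_n$. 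The case $i = 0$ reduces to $\delta_a \delta_a^{-1} = \id_G$; for $i \geq 1$ the $i$-th vertex of $h(x)$ is $v_{i-1}$, so
$$
d_i^\delta \tilde h_n(g_x) = (\delta_{v_{i-1}} \delta_a^{-1} g)_{d_i h(x)} = (\delta_a^{-1} \delta_{v_{i-1}} g)_{h(d_{i-1} x)} = \tilde h_{n-1} d_{i-1}^\delta(g_x),
$$
where $\delta_a^{-1} \delta_{v_{i-1}} = \delta_{v_{i-1}} \delta_a^{-1}$ follows from the commuting rule~(\ref{equation1.1}), valid because $a$ is joined by an edge or arrow to every other vertex of the cone. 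A routine alternating-sum manipulation then yields $\partial \tilde h + \tilde h \partial = \id$, giving a chain contraction of $\Z(F^{G,\Delta}_\delta[S])$; passing to abelianization does the same for $(F^{G,\Delta}_\delta[S])^{\ab}$.

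The hard part is essentially encoded in the hypothesis: the two conditions in the definition of a regular extension are exactly what is needed for the contraction to survive the twist. Invertibility of $\delta_a$ is required to define the prefactor $\delta_a^{-1}$, while the commutativity $\delta_a \delta_v = \delta_v \delta_a$ is precisely what allows $\delta_a^{-1}$ to slide past $\delta_{v_{i-1}}$ in the face computation. Beyond that, the verification is pure bookkeeping with the face-reindexing identity $d_i h = h d_{i-1}$, exactly as in the untwisted cone contraction.
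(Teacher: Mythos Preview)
Your proof is correct and essentially identical to the paper's: the map $\tilde h_n$ you define is exactly the paper's homomorphism $\Phi$, and the paper verifies the same two identities $d_0\Phi=\id$ and $d_i\Phi=\Phi d_{i-1}$ for $i\geq 1$ before drawing the same alternating-sum conclusion. Your only addition is making the basepoint choice $a_0=a$ explicit (which the paper leaves implicit but needs for $\Phi$ to be well-defined on the quotient).
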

\begin{proof}
Let
$$
\Phi\colon (F^{G,\Delta}_{\delta}[S])_n=F^{G}[S]_n\longrightarrow
(F^{G,\Delta}_{\delta}[S])_{n+1}=F^{G}[S]_{n+1}
$$
be the (unique) group homomorphism such that
$$
\Phi|_{(G_n)_{v_0\alpha_1v_1\cdots \alpha_nv_n}}(g_{v_0\alpha_1v_1\cdots \alpha_nv_n})=
(\delta_a)^{-1}(g)_{ae_{a,v_0}v_0\alpha_1v_1\cdots \alpha_nv_n}.
$$
Let us compute $d^{\delta}_i\circ \Phi$. We shorten the notation $d^{\delta}_i$ as $d_i$.
$$
\begin{array}{rcl}
d_0 \Phi(g_{v_0\alpha_1v_1\cdots \alpha_nv_n})&=&d_0((\delta_a)^{-1}
(g)_{ae_{a,v_0}v_0\alpha_1v_1\cdots \alpha_nv_n})\\
&=&(\delta_a(d_0((\delta_a)^{-1}(g))))_{v_0\alpha_1v_1\cdots \alpha_nv_n}\\
&=&(\delta_a((\delta_a)^{-1}(g))_{v_0\alpha_1v_1\cdots \alpha_nv_n}
\textrm{  because $G$ is discrete}\\
&=&g_{v_0\alpha_1v_1\cdots \alpha_nv_n}.\\
\end{array}
$$
Thus
\begin{equation}\label{equation5.1}
d_0\Phi=\mathrm{id}.
\end{equation}
Now let $i>0$. Then
$$
\begin{array}{rcl}
d_i\Phi(g_{v_0\alpha_1v_1\cdots\alpha_nv_n})&=&d_i((\delta_a)^{-1}
(g)_{ae_{a,v_0}v_0\alpha_1v_1\cdots\alpha_nv_n})\\
&=&(\delta_{v_{i-1}}(d_i(\delta_a)^{-1}(g)))_{ae_{a,v_0}v_0\alpha_1v_1\cdots
v_{i-2}\alpha_{i-1}\alpha_iv_i \cdots\alpha_nv_n}\\
&=&((\delta_a)^{-1}\delta_{v_{i-1}}g)_{ae_{a,v_0}v_0\alpha_1v_1\cdots
v_{i-2}\alpha_{i-1}\alpha_iv_i \cdots\alpha_nv_n}\\
&&\qquad\textrm{because $G$ is discrete}\\
&=&\Phi d_{i-1}(g_{v_0\alpha_1v_1\cdots\alpha_nv_n}).\\
\end{array}
$$
Thus we have the following identity:
\begin{equation}\label{equation5.2}
d_i\Phi=\Phi d_{i-1}.
\end{equation}
It follows that, in the chain complex $\Z(F^{G,\Delta}_{\delta}[S])$,
$$
\begin{array}{rcl}
\partial_{n+1}\circ\Phi&=&\sum\limits_{i=0}^{n+1}(-1)^id_i\Phi\\
&=&\id-\sum\limits_{i=1}^{n+1}(-1)^{i-1}\Phi d_{i-1}\\
&=&\id-\Phi\circ \partial_n.\\
\end{array}
$$
and so $\Z(F^{G,\Delta}_{\delta}[S])$ is contractible. By taking the abelianization, the homomorphisms $\Phi^{\ab}$ defines a null homotopy for the chain complex $F^{G,\Delta}[S]^{\ab}$. 

\end{proof}


\begin{thebibliography}{MMM}
\bibitem{BL} \auths{H. Barcelo and R. Laubenbacher} \artTitle{Perspectives on $A$-homotopy theory and its applications} \jTitle{Discr. Math.} \Vol{298} \Year{2002}, \Pages{39Ð61}.
\bibitem{BKLW} \auths{H. Barcelo, X. Kramer, R. Laubenbacher and Christopher Weaver} \artTitle{Foundations of a connectivity theory for simplicial complexes} \jTitle{Adv. Appl. Math.} \Vol{26} \Year{2001}, 97Ð128.
\bibitem{BGM} \auths{M. G. Barratt, V. K. A. M. Gugenheim and J. C. Moore} \artTitle{On semisimplicial fibre-bundles}
\jTitle{Amer. J. Math.} \Vol{81} \Year{1959}, \Pages{639--657}.
 \bibitem{Bousfield} \textrm{A. K. Bousfield and E. M. Friedlander}, \textit{Homotopy theory of $?$-spaces, spectra, and bisimplicial sets}. Geometric applications of homotopy theory (Proc. Conf., Evanston, Ill., 1977), II, pp. 80Ð130, Lecture Notes in Math., 658, Springer, Berlin, 1978.
\bibitem{Carlsson} \auths{G. Carlsson} \artTitle{A simplicial group construction for balanced products} \jTitle{Topology} \Vol{23} \Year{1984}, \Pages{85Ð89}.
\bibitem{CM} \auths{A. Cavicchioli and M. Meschiari} \artTitle{A homology theory for colored graphs} \jTitle{Discrete Math.} \Vol{137} \Year{1995}, \Pages{99-136}.
\bibitem{Curtis} \auths{E. B. Curtis} \artTitle{Simplicial homotopy theory} \jTitle{Advances in Mathematics} \Vol{6}  \Year{1971}, \Pages{107-209}.
\bibitem{Everitt-Turner} \auths{B. Everitt and P. Turner} \artTitle{Homology of coloured posets: A generalisation of Khovanov's cube construction} \jTitle{J. of Algebra} \Vol{322} \Year{2009}, \Pages{429-448}.
\bibitem{GMY} \textrm{Alexander Grigor'yan, Yuri Muranov, and Shing-Tung Yau}, \textit{Graphs associated with simplicial complexes}, \textrm{Homology, Homotopy, and Applications} \textbf{16} (2014), 295-311.
\bibitem{GMY2} \textrm{Alexander Grigor'yan, Yuri Muranov, and Shing-Tung Yau}, \textit{On a cohomology of digraphs and Hochschild cohomology}, JHRS, to appear.
\bibitem{GLMY} \auths{ Alexander Grigor'yan, Yong Lin, Yuri Muranov, and Shing-Tung Yau} \artTitle{Homologies of path complexes and digraphs} preprint.
\bibitem{GLMY2} \auths{ Alexander Grigor'yan, Yong Lin, Yuri Muranov, and Shing-Tung Yau} \artTitle{Path complexes and their homologies} preprint.
\bibitem{Kan-Thurston} \textrm{D. M. Kan and W. P. Thurston}, \textit{Every connected space has the homology of a $K(\pi,1)$} \textrm{Topology} \textbf{15}  (1976), 253--258.
\bibitem{Quillen} D. G. Quillen. \emph{Spectral  sequences of a double semi-simplicial group},
Topology. \textbf{5}(1966), 155-157.
\bibitem{Quillen2} \textrm{D. G. Quillen}, \textit{Homotopical algebra}. Lecture Notes in Mathematics, No.~\textbf{43} Springer-Verlag, Berlin-New York 1967 iv+156 pp.
\bibitem{TB} \auths{Mohamed Elamine Talbi and Djilali Benayat} \artTitle{Homology theory of graphs} \jTitle{Mediterranean J. of Math.} \Vol{11} \Year{2014}, \Pages{813-828}.
\bibitem{Turner-Wagner} \auths{P. Turner and E. Wagner} \artTitle{The homology of digraphs as a generalization of Hochschild homology} \jTitle{J. Algebra Appl.} \Vol{11} \Year{2012}, \Pages{1250031 (13 pages)}.
\bibitem{Wu} \auths{J. Wu} \artTitle{Simplicial objects and homotopy groups}. In: Braids,
  Lect. Notes Ser. Inst. Math. Sci. Natl. Univ. Singap., 19, World Sci. Publ.,
   Hackensack, NJ, 2010.  \Pages{31-181}.
\end{thebibliography}
\end{document}